\theoremstyle{plain}
\newtheorem{theorem}{Theorem}[section]
\newtheorem*{question*}{Question 1}
\newtheorem*{question**}{Question 2}
\newtheorem{definition}[theorem]{Definition}
\newtheorem{prop}[theorem]{Proposition}
\newtheorem{cor}[theorem]{Corollary}
\newtheorem{rem}[theorem]{Remark}
\newtheorem{ex}[theorem]{Example}
\newcommand\C{{\mathbb C}}
\newcommand{\del}{\partial}
\newcommand{\delbar}{\overline{\del}}
\begin{document}
\title[An integral condition involving $\delbar$-harmonic $(0,1)$-forms ]{An integral condition involving $\delbar$-harmonic $(0,1)$-forms}

\author{Anna Fino}
\address[Anna Fino]{Dipartimento di Matematica ``G. Peano'' \\
Universit\`{a} degli studi di Torino \\
Via Carlo Alberto 10\\
10123 Torino, Italy}
\address{Department of Mathematics and Statistics, Florida\\
International University Miami Florida\\
33199, USA
}
\email{annamaria.fino@unito.it, afino@fiu.edu}

\author{Nicoletta Tardini}
\address[Nicoletta Tardini]{Dipartimento di Scienze Matematiche, Fisiche e Informatiche\\
Unit\`a di Matematica e Informatica\\
Universit\`a degli Studi di Parma\\
Parco Area delle Scienze 53/A\\
43124 Parma, Italy}
\email{nicoletta.tardini@unipr.it}

\author{Adriano Tomassini}
\address[Adriano Tomassini]{Dipartimento di Scienze Matematiche, Fisiche e Informatiche\\
Unit\`a di Matematica e Informatica\\
Universit\`a degli Studi di Parma\\
Parco Area delle Scienze 53/A\\
43124 Parma, Italy}
\email{adriano.tomassini@unipr.it}

\keywords{almost complex manifold; strongly Gauduchon metric; harmonic form}

\subjclass[2020]{53C15; 58A14; 53C55}
\begin{abstract} 
We  study  compact almost complex manifolds admitting a  Hermitian metric   satisfying  an integral condition involving $\overline \partial$-harmonic $(0,1)$-forms.
We prove that   this  integral condition is  automatically satisfied, if the Hermitian metric  on the compact almost complex manifold is strongly Gauduchon. Under the further assumption that
 the almost complex structure is integrable,   we show that the integral condition for  a Gauduchon metric is equivalent to be  strongly Gauduchon.

 In particular, a compact complex surface with  a Gauduchon metric  satisfying   the integral condition is automatically K\"ahler.
If we   drop the integrability assumption on the complex structure,  we show   that there exists a compact almost complex 4-dimensional
manifold   with  a  Hermitian metric  satisfying
the integral condition, but which does not admit any compatible almost-K\"ahler metric. 
\end{abstract}
\maketitle
\section{Introduction}
Let $(M,J)$ be a compact complex manifold. The existence of a K\"ahler metric $g$ on $(M,J)$ gives rise to a cohomological decomposition of the de Rham cohomology as the direct sum of Dolbeault cohomology groups and all the complex cohomologies, Bott-Chern, Aeppli and Dolbeault coincide. Moreover, the existence of a K\"ahler metric imposes also strong restrictions on the topology of $M$. Furthermore, the $\del\delbar$-Lemma holds on $(M,J)$. For a given almost complex manifold $(M,J)$, with non-integrable $J$, such a cohomological decomposition does not hold anymore and Dolbeault, Bott-Chern and Aeppli cohomologies cannot be defined in the usual way, since $\delbar^2\neq 0$; nevertheless, once a Hermitian metric $g$ is fixed on $(M,J)$, the Dolbeault Laplacian can still be defined as in the complex case and it turns out that it is a second order elliptic differential operator, therefore, for every $(p,q)$, the kernels of its restriction to the spaces of $(p,q)$-forms on $(M,J)$ are finite dimensional complex vector spaces, called $\delbar$-{\em harmonic spaces}. Kodaira and Spencer (see \cite[Problem 20]{hirzebruch}) asked whether the dimension of such spaces could depend on the Hermitian metric $g$. Very recently Holt and Zhang in \cite{holt-zhang} answered such a question, providing a family of almost complex structures on the Kodaira-Thurston manifold such that the Hodge number $h^{0,1}_{\delbar}$ varies with different choices of Hermitian metrics. For other results on the study of the $\delbar$-harmonic spaces see \cite{tardini-tomassini-dim4}, \cite{piovani} and \cite{tardini-tomassini-dim6}.  \newline 
	The aim of this paper is the study of Hermitian metrics on a compact $2n$-dimensional almost complex manifold satisfying an integral  condition involving the space of $\delbar$-harmonic $(0,1)$-forms. More precisely,
let $(M,J)$ be a $2n$-dimensional almost complex manifold. A Hermitian metric $g$  on $(M,J)$ is called \emph{Gauduchon} if $\del\delbar\omega^{n-1}=0$, where by $\omega$ we denote its fundamental form. If $M$ is compact these metrics always exist, in fact Gauduchon in \cite{gauduchon} proved that if
 $(M,J)$ is a $2n$-dimensional compact almost complex manifold with $n>1$, then any conformal class of any given Hermitian metric contains a Gauduchon metric, unique up to multiplication with positive constants. If  $n =2$  and $J$ is integrable,  a  Gauduchon metric  is SKT (or pluriclosed) and so one can be found in the conformal class of any given Hermitian metric.  For $n >2$ the theory  about  SKT metrics is very different, see  for instance \cite{FPS, GGP, Sw, FP} and the references therein  for  explicit  constructions.
 A subclass of  Gauduchon  metrics  on complex manifolds was defined by Popovici in \cite{popovici-surface} as \emph{strongly Gauduchon}, namely Hermitian metrics  $\omega$ such that $\del\omega^{n-1}$ is $\delbar$-exact. 
Examples of strongly Gauduchon metrics are \emph{balanced metrics} namely Hermitian metrics $\omega$ satisfying $d\omega^{n-1}=0$. These metrics play an important role in complex geometry and they have been widely studied (see for instance \cite{michelson, fino-vezzoni-1, fino-vezzoni-skt-bal, andrada-villacampa} and the references therein).
If $J$ is non-integrable $\delbar^2\neq0$, however the definition of strongly Gauduchon metric can be naturally generalized asking that $\omega$ is Gauduchon and it satisfies  the condition that $\del\omega^{n-1}$ is $\delbar$-exact. We study these metrics on compact almost complex manifolds and we show that they satisfy an integral condition involving $\overline \partial$-harmonic $(0,1)$-forms (such condition can be also thought as an orthogonality condition by Proposition \ref{prop:orthogonal}). More precisely, in Theorem  \ref{thm:sg-implica-ci} we prove that
on compact  $2n$-dimensional almost complex manifolds every  strongly Gauduchon metric $\omega$ satisfies
$$
\int_M\del\eta\wedge\omega^{n-1}=0,
$$
for every  $\eta\in \mathcal{H}^{0,1}_{\delbar}(M)$. Here $\mathcal{H}^{0,1}_{\delbar}(M)$ denotes the space of $\delbar$-harmonic $(0,1)$-forms. This space is well defined and finite dimensional but notice that if $J$ is non-integrable it does not have a cohomological meaning and its dimension depends on the choice of the Hermitian metric $\omega$ (see \cite{holt-zhang}).
If $J$ is integrable, Gauduchon metrics satisfy the integral condition and such condition does not depend on harmonic forms but just on the Dolbeault cohomology classes (see Theorems \ref{prop:sg-ci}, \ref{thm:sg-ci-cohomology})
\begin{theorem}
Let $M$ be a compact complex manifold  of complex dimension $n$ and let $\omega$ be a Gauduchon metric, then $\omega$ is strongly Gauduchon if and only if
$$
\int_M\del\left[\eta\right]\wedge\omega^{n-1}=0,
$$
for every  $\left[\eta\right]\in H^{0,1}_{\delbar}(M)$.
\end{theorem}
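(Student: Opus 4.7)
My plan is to prove both directions simultaneously by reinterpreting the integral as the Serre duality pairing between the Dolbeault classes $[\partial\omega^{n-1}] \in H^{n,n-1}_{\bar\partial}(M)$ and $[\eta] \in H^{0,1}_{\bar\partial}(M)$.

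First I would check that the integral is well defined on Dolbeault cohomology classes. If $\eta' = \eta + \bar\partial f$ for a smooth function $f$, then $\partial\eta' - \partial\eta = \partial\bar\partial f$. Applying Stokes to $d(\bar\partial f\wedge \omega^{n-1})$ and $d(f\,\partial\omega^{n-1})$ and discarding terms of wrong bidegree, the Gauduchon condition $\partial\bar\partial\omega^{n-1}=0$ yields $\int_M \partial\bar\partial f\wedge\omega^{n-1}=0$. This both justifies the notation $\partial[\eta]\wedge\omega^{n-1}$ and isolates where the Gauduchon hypothesis enters.

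The core of the argument is the identity
\[
\int_M \partial\eta\wedge\omega^{n-1} \;=\; \int_M \eta\wedge\partial\omega^{n-1},
\]
valid for any $\bar\partial$-closed $(0,1)$-form $\eta$. I would obtain this from Stokes applied to $d(\eta\wedge\omega^{n-1})$: the pieces $\bar\partial\eta\wedge\omega^{n-1}$ and $\eta\wedge\bar\partial\omega^{n-1}$ vanish (the first because $\eta$ is $\bar\partial$-closed, the second by bidegree, since $(0,1)\wedge(n-1,n)$ is of type $(n-1,n+1)$), leaving only $\partial\eta\wedge\omega^{n-1}$ and $\eta\wedge\partial\omega^{n-1}$, which must cancel.

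Next, the Gauduchon condition gives $\bar\partial(\partial\omega^{n-1})=-\partial\bar\partial\omega^{n-1}=0$, so $\partial\omega^{n-1}$ represents a class in $H^{n,n-1}_{\bar\partial}(M)$. Up to a sign from reordering a $(2n-1)$-form and a $1$-form, the right-hand side of the identity above is precisely the Serre duality pairing
\[
\langle\,[\partial\omega^{n-1}],\,[\eta]\,\rangle \;=\; \int_M \partial\omega^{n-1}\wedge\eta,
\]
between $H^{n,n-1}_{\bar\partial}(M)$ and $H^{0,1}_{\bar\partial}(M)$. Since $M$ is a compact complex manifold, Serre duality asserts this pairing is non-degenerate. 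Therefore the vanishing of $\int_M \partial[\eta]\wedge\omega^{n-1}$ for every class $[\eta]\in H^{0,1}_{\bar\partial}(M)$ is equivalent to $[\partial\omega^{n-1}]=0$ in $H^{n,n-1}_{\bar\partial}(M)$, i.e.\ to $\partial\omega^{n-1}$ being $\bar\partial$-exact, which is exactly the strongly Gauduchon condition. The argument is essentially free of obstacles; the only thing requiring care is bidegree bookkeeping in the Stokes computations and making sure the Gauduchon hypothesis is invoked at the right moment to make the left-hand side depend only on the Dolbeault class of $\eta$.
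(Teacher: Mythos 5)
Your proof is correct, but it takes a genuinely different route from the paper's. The paper first proves the harmonic-form version (Theorem \ref{prop:sg-ci}): the integral condition is rewritten as $\del^*\omega\perp\mathcal{H}^{0,1}_{\delbar}(M)$, the Hodge decomposition then gives $\del^*\omega=\delbar f+\delbar^*\beta$, and a computation with the Hodge star, using the Gauduchon condition to kill the $\delbar f$ term, yields $\del\omega^{n-1}\in\operatorname{Im}\delbar$; the cohomological statement above is then obtained as a reformulation via the remark that, for $\omega$ Gauduchon, the integral depends only on the Dolbeault class of $\eta$. You instead go directly at the cohomological statement: after the same Stokes manipulations (which you carry out correctly, including the well-definedness check where the Gauduchon hypothesis enters), you identify the integral with the Serre duality pairing of $[\del\omega^{n-1}]\in H^{n,n-1}_{\delbar}(M)$ against $H^{0,1}_{\delbar}(M)$, and non-degeneracy of that pairing immediately converts the vanishing condition into $\delbar$-exactness of $\del\omega^{n-1}$. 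Your argument is shorter and more conceptual, and it makes transparent that the obstruction is the class $[\del\omega^{n-1}]$ (this is essentially Popovici's original point of view on strongly Gauduchon metrics); what it gives up is any prospect of generalization, since it leans on Dolbeault cohomology and Serre duality, which are unavailable when $J$ is non-integrable, whereas the paper's harmonic/Hodge-decomposition formulation is engineered precisely so that the forward implication and the integral condition itself still make sense in the almost complex setting treated elsewhere in the paper.
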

Recall that strongly Gauduchon metrics coincide with Gauduchon metrics when $(M,J)$ is compact and satisfies the $\del\delbar$-lemma. 
In fact, compact complex manifolds such that every Gauduchon metric is strongly Gauduchon are called sGG and they were introduced and studied in \cite{popovici-ugarte}. In Proposition \ref{prop:sgg-ci} and Corollary \ref{cor:sgg-ci} we observe that if either every Hermitian metric satisfies the integral condition or if $h^{0,1}_{\delbar}=0$ then the manifold is sGG.\\
Notice that as recalled in Proposition \ref{prop:left-inv-gauduchon}
on compact quotients  of  connected, simply connected Lie groups by  lattices  endowed with an invariant almost complex structure every invariant Hermitian metric $\omega$ is Gauduchon.  Here by  invariant structure we mean a structure induced by a left-invariant one.  Hence, in order to construct  on solvmanifolds  Hermitian metrics satisfying the integral condition but  not strongly Gauduchon  we  consider non invariant Hermitian metrics. More precisely,  in Example \ref{ex:iwasawa} we construct on the Iwasawa manifold a family of non invariant Hermitian metrics that are not Gauduchon but they satisfy the integral condition.  \\
One should also notice that the existence of a strongly Gauduchon metric is quite restrictive on compact complex surfaces, because in this case it implies the existence of a K\"ahler metric.   
We ask the following
%
 \begin{question*} \label{conj:surface}
Let $(M,J)$ be a compact complex surface. If there exists a Hermitian metric satisfying the integral condition then does $(M, J)$  admit a K\"ahler metric?
\end{question*}

We show that   the answer is negative if we do not assume that $J$ is integrable. Indeed, in Section \ref{sect:ci-surf} we construct 
a compact almost complex $4$-dimensional manifold  that admits a strongly Gauduchon metric but it does not admit any almost-K\"ahler metric. In particular, such a metric satisfies the integral condition.
This example is constructed starting with the  4-dimensional filiform nilmanifold and it shows
that the result in \cite{popovici-surface} saying that strongly Gauduchon compact complex surfaces are K\"ahler does not hold as soon as we drop the integrability assumption on the complex structure.

\smallskip

{\it Acknowledgements.} 
The authors would like to thank Scott O. Wilson for pointing out a mistake in a Remark that doesn't affect the results.
The first   author is partially  supported by Project PRIN 2017 “Real and complex manifolds: Topology, Geometry and Holomorphic Dynamics”, by GNSAGA of INdAM and by  a grant from the Simons Foundation (\#944448).  The second author has financially been supported by the Programme ``FIL-Quota Incentivante'' of University of Parma and co-sponsored by Fondazione Cariparma and by GNSAGA of INdAM.  The third author is  supported by Project PRIN 2017 “Real and complex manifolds: Topology, Geometry and Holomorphic Dynamics” and by GNSAGA of INdAM. The authors would like also to thank the anonymous referee for useful comments.

\section{Preliminaries}

In this Section we will recall some well known facts about almost complex manifolds and Hermitian metrics. Let $M$ be a $2n$-dimensional smooth manifold and let $J$ be an almost complex structure on $M$, namely $J$ is a $(1,1)$-tensor such that $J^2=-\text{Id}$. Then $J$ induces naturally on the space of differential forms $A^\bullet(M)$ a bigrading,
$$
A^\bullet(M)=\bigoplus_{p+q=\bullet}A^{p,q}(M)\,.
$$
According to this decomposition, the exterior derivative $d$ splits into four operators
$$
d:A^{p,q}(M)\to A^{p+2,q-1}(M)\oplus A^{p+1,q}(M)\oplus A^{p,q+1}(M)\oplus A^{p-1,q+2}(M),
$$
$$
d=\mu+\del+\delbar+\bar\mu\,,
$$
where $\mu$ and $\bar\mu$ are differential operators that are linear over functions.
In particular, $J$ is integrable, that is $J$ induces a complex structure on $M$, if and only if $\mu=\bar\mu=0$.\\
In general, the condition $d^2=0$ gives the following
$$
\left\lbrace
\begin{array}{lcl}
\mu^2 & =& 0\\
\mu\del+\del\mu & = & 0\\
\del^2+\mu\delbar+\delbar\mu & = & 0\\
\del\delbar+\delbar\del+\mu\bar\mu+\bar\mu\mu & = & 0\\
\delbar^2+\bar\mu\del+\del\bar\mu & = & 0\\
\bar\mu\delbar+\delbar\bar\mu & = & 0\\
\bar\mu^2 & =& 0
\end{array}
\right.\,.
$$
In particular, on almost complex manifolds  we have $\delbar^2\neq 0$ and so the Dolbeault cohomology of $M$
$$
H^{\bullet,\bullet}_{\delbar}(M):=
\frac{\text{Ker}\,\delbar}{\text{Im}\,\delbar}
$$
is well defined if and only if $J$ is integrable.\\
A Hermitian metric $g$ on an almost complex manifold $(M,J)$ is a Riemannian metric such that $g(u,v)=g(Ju,Jv)$ for every vector fields $u,v\in\Gamma(TM)$. The $2$-form $\omega$ defined by
\begin{equation*}
\omega(u,v)=g(Ju,v), \ \ \forall u,v\in\Gamma(TM),
\end{equation*}
is called the fundamental form of $g$. We will call $(M,J,\omega)$ an almost Hermitian manifold. If in addition, $\omega$ is $d$-closed, namely $d\omega=0$, then $(M,J,\omega)$ is called {\em almost K\"ahler manifold}.\\
Let $(M,J,\omega)$ be an almost Hermitian manifold and denote with $*$ the 
 associated $\mathbb{C}$-linear Hodge-$*$-operator, then the following operators
\begin{equation*}
d^*=-*d*,\ \ \ \mu^*=-*\bar\mu*,\ \ \ \del^*=-*\delbar*,\ \ \ \delbar^*=-*\del*,\ \ \ \bar\mu^*=-*\mu*,
\end{equation*}
are the formal adjoint operators respectively of $d,\mu,\del,\delbar,\bar\mu$. Recall that $\Delta_{d}=dd^*+d^*d$ is the Hodge Laplacian, and, as in the integrable case, set 
\begin{equation*}
\Delta_{\del}=\del\del^*+\del^*\del,\ \ \ \Delta_{\delbar}=\delbar\delbar^*+\delbar^*\delbar,
\end{equation*}
respectively as the $\del$ and $\delbar$ Laplacians. 
We will be mainly interested in $\Delta_{\delbar}$, 
this is a second order, elliptic, differential operator and we will denote its kernel by
$$
\mathcal{H}^{p,q}_{\delbar}(M):=\text{Ker}\,\Delta_{\delbar_{\vert A^{p,q}(M)}}\,.
$$
In particular, if $M$ is compact these spaces are finite-dimensional and their dimensions will be denoted by $h^{p,q}_{\delbar}$ (cf. \cite{hirzebruch}). The numbers $h^{p,q}_{\delbar}$ are called Hodge numbers.
Moreover, if $M$ is compact, then $\alpha\in \mathcal{H}^{p,q}_{\delbar}(M)$ if and only if
$$
\delbar\alpha=0\quad\text{and}\quad \del*\alpha=0.
$$
By \cite{holt-zhang} (see also \cite{tardini-tomassini-dim4}) we know that, if $J$ is non-integrable, the dimensions $h^{p,q}_{\delbar}$ of the spaces of harmonic forms  are not holomorphic invariants, more precisely they depend on the choice of the Hermitian metric. Clearly, if $(M,J)$ is a complex manifold,  the Hodge numbers $h^{p,q}_{\delbar}$ are nothing but the dimensions of the associated Dolbeault cohomology groups $H^{p,q}_{\delbar}(M)$.\\
This follows from the usual Hodge decomposition
$$
A^{p,q}(M)=\mathcal{H}^{p,q}_{\delbar}(M)
\stackrel{\perp}{\oplus}
\text{Im}\,\delbar\stackrel{\perp}{\oplus}\text{Im}\,\delbar^*\,.
$$
We point out that, when $J$ is non-integrable, the Hodge decomposition holds but it is not an orthogonal decomposition. More precisely, from
\cite[Theorem 4.1]{cirici-wilson}, one has
$$
A^{p,q}(M)=\mathcal{H}^{p,q}_{\delbar}(M)
\stackrel{\perp}{\oplus}
\left(\text{Im}\,\delbar+\text{Im}\,\delbar^*\right)\,.
$$

\section{The integral Condition}

In this Section we define and study the following class of Hermitian metrics on compact almost complex manifolds.

\begin{definition}
Let $(M,J)$ be a compact almost complex manifold of dimension $2n$. A Hermitian metric $\omega$ on $(M,J)$ is said to satisfy the \emph{integral condition} if
\begin{equation}\label{eq:ci}
\int_M\del\eta\wedge\omega^{n-1}=0,
\end{equation}
for every  $\eta\in \mathcal{H}^{0,1}_{\delbar}(M)$.
\end{definition}

Notice that, since $M$ is compact, as mentioned the spaces $\mathcal{H}^{\bullet,\bullet}_{\delbar}(M)$ of $\delbar$-harmonic forms are well-defined and finite dimensional even though $J$ is non-integrable (cf. \cite{hirzebruch}).
However, the dimension of $\mathcal{H}^{0,1}_{\delbar}(M)$ depends in general on the Hermitian metric $\omega$.

\begin{rem}
Clearly, if $\omega$ is such that $\mathcal{H}^{0,1}_{\delbar}(M)=\mathcal{H}^{0,1}_{\del}(M)$,  then the integral condition is trivially satisfied.
\end{rem}

We give now a characterization of the Hermitian metrics satisfying the integral condition.

\begin{prop}\label{prop:orthogonal}
Let $(M,J)$ be a  compact $2n$-dimensional  almost complex manifold. A Hermitian metric $\omega$ on $(M,J)$ satisfies the integral condition if and only if $\del^*\omega\perp \mathcal{H}^{0,1}_{\delbar}(M)$.
\end{prop}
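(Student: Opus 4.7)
The plan is to recognize the integral $\int_M \del\eta\wedge\omega^{n-1}$ as a nonzero multiple of the global Hermitian pairing $\langle\del\eta,\omega\rangle_{L^2}$ on $A^{1,1}(M)$, and then transfer $\del$ to its formal adjoint. Once this is done, the integral condition becomes manifestly an $L^2$-orthogonality statement for $\del^*\omega$, which is precisely the claim.

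The key input is the standard Hermitian linear-algebra identity
$$\omega^{n-1}=(n-1)!\,{*\omega},$$
which follows from Weil's Lefschetz formula applied to the primitive $(0,0)$-form $1$ (using $*1=\omega^n/n!$). Substituting gives
$$\int_M\del\eta\wedge\omega^{n-1}=(n-1)!\int_M\del\eta\wedge *\omega.$$
Since $\omega$ is a real $(1,1)$-form, $*\bar\omega=*\omega$, so the right-hand side is nothing but $(n-1)!\,\langle\del\eta,\omega\rangle_{L^2}$, using the convention $\langle\alpha,\beta\rangle_{L^2}=\int_M\alpha\wedge *\bar\beta$ recalled in the Preliminaries.

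Next, by the defining property of the formal adjoint $\del^*$, we have $\langle\del\eta,\omega\rangle_{L^2}=\langle\eta,\del^*\omega\rangle_{L^2}$, and $\del^*\omega\in A^{0,1}(M)$ matches the bidegree of $\eta$, so the pairing is well-posed. Combining gives
$$\int_M\del\eta\wedge\omega^{n-1}=(n-1)!\,\langle\eta,\del^*\omega\rangle_{L^2},$$
and since $(n-1)!\neq 0$, the left-hand side vanishes for every $\eta\in\mathcal{H}^{0,1}_{\delbar}(M)$ if and only if $\del^*\omega$ is $L^2$-orthogonal to that finite-dimensional subspace.

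I do not anticipate any real obstacle: the proposition is essentially a repackaging of definitions once one identifies $\omega^{n-1}$ with $(n-1)!\,{*\omega}$. The only items worth double-checking are the bidegree bookkeeping (so that $*\omega$ and $\del^*\omega$ indeed live in the expected bidegrees) and the reality of $\omega$, which ensures $*\bar\omega=*\omega$. In particular, nothing in the argument uses integrability of $J$, consistent with the statement being valid in the almost complex setting.
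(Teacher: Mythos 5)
Your proposal is correct and follows essentially the same route as the paper: both rewrite $\int_M\del\eta\wedge\omega^{n-1}$ as $(n-1)!\,(\del\eta,\omega)_{L^2}$ via the identity $*\omega=\omega^{n-1}/(n-1)!$ and then pass to $(n-1)!\,(\eta,\del^*\omega)_{L^2}$ by adjointness. The bidegree and reality checks you flag are exactly the (implicit) verifications in the paper's argument, so nothing further is needed.
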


\begin{proof}
Notice that the integral condition (\ref{eq:ci}) can be rewritten, for every $\eta\in \mathcal{H}^{0,1}_{\delbar}(M)$, as
$$
0=\int_M\del\eta\wedge\omega^{n-1}=
(n-1)!\int_M\del\eta\wedge *\omega=
(n-1)!\left(\del\eta,\omega\right)_{L^2}=(n-1)!\left(\eta,\del^*\omega\right)_{L^2}
$$
namely (\ref{eq:ci}) is equivalent to  $\del^*\omega\perp \mathcal{H}^{0,1}_{\delbar}(M)$. 
\end{proof}


Recall that a Hermitian metric $\omega$ on a $2n$-dimensional almost complex manifold is called \emph{Gauduchon} if $\del\delbar\omega^{n-1}=0$. 
The \emph{Lee form} of a Hermitian metric $\omega$ is defined as the unique $1$-form $\theta$ such that $d\omega^{n-1}=\theta\wedge\omega^{n-1}$. Then, a Hermitian metric $\omega$ is Gauduchon if and only if $d^*\theta=0$.\\
If $n>1$ by the celebrated result in \cite{gauduchon} (see also \cite{gauduchon1}), a Gauduchon metric always exists in every conformal class of a Hermitian metric and it is unique up to multiplication with positive constants.
On complex manifolds a special class of Gauduchon metrics is given by the so called  \emph{strongly Gauduchon} metrics, namely Hermitian metrics $\omega$ satisfying $\del\omega^{n-1}$ being $\delbar$-exact (see \cite{popovici-surface} and the references therein). Notice that this last definition originally given for complex manifolds can be naturally extended on almost complex manifolds.
Indeed, on almost complex manifolds $\delbar^2\neq 0$ and $\del\delbar+\delbar\del\neq 0$, but for bidegree reasons $$\del\delbar\omega^{n-1}=-\delbar\del\omega^{n-1}.$$
Moreover, if
$$
\del\omega^{n-1}=\delbar\lambda
$$
where $\lambda\in A^{n,n-2}(M)$. Then
$$
\delbar\del\omega^{n-1}=\delbar^2\lambda=-\del\bar\mu\lambda
$$
and this last term is not zero in general. So, on almost complex manifolds, in the definition of strongly Gauduchon metrics we need to assume in addition that $\omega$ is Gauduchon. Namely, we will say that a Hermitian metric $\omega$ on an almost complex manifold is \emph{strongly Gauduchon} if it is Gauduchon and $\del\omega^{n-1}$ is $\delbar$-exact.  
Then we prove the following

\begin{theorem}\label{thm:sg-implica-ci}
Let $(M,J)$ be a compact  $2n$-dimensional almost complex manifold and let $\omega$ be a  strongly Gauduchon metric. Then,
$$
\int_M\del\eta\wedge\omega^{n-1}=0,
$$
for every  $\eta\in \mathcal{H}^{0,1}_{\delbar}(M)$.
\end{theorem}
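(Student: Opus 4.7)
The plan is to apply Stokes' theorem twice on $M$, carefully bookkeeping the four bidegree components of $d=\mu+\del+\delbar+\bar\mu$ and using the vanishing of $A^{p,q}(M)$ whenever $p>n$ or $q>n$. First I would integrate by parts on the $(n-1,n)$-form $\eta\wedge\omega^{n-1}$. Expanding
$$d(\eta\wedge\omega^{n-1})=d\eta\wedge\omega^{n-1}-\eta\wedge d\omega^{n-1}$$
and splitting $d\eta$ and $d\omega^{n-1}$ into bidegree components, I would use $\delbar\eta=0$ (which holds because $\eta$ is $\delbar$-harmonic) and observe that every remaining offending term lands in some $A^{p,q}$ with $p>n$ or $q>n$: concretely $\mu\eta\wedge\omega^{n-1}\in A^{n+1,n-1}=0$, $\eta\wedge\mu\omega^{n-1}\in A^{n+1,n-2}\cdot A^{0,1}$ and $\eta\wedge\bar\mu\omega^{n-1}$ vanish by range, and $\eta\wedge\delbar\omega^{n-1}\in A^{n-1,n+1}=0$. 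Only the $(n,n)$-pieces survive, giving $d(\eta\wedge\omega^{n-1})=\del\eta\wedge\omega^{n-1}-\eta\wedge\del\omega^{n-1}$, and Stokes yields
$$\int_M \del\eta\wedge\omega^{n-1}=\int_M \eta\wedge\del\omega^{n-1}.$$

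Second, I would invoke the strongly Gauduchon hypothesis to write $\del\omega^{n-1}=\delbar\lambda$ for some $\lambda\in A^{n,n-2}(M)$, so that the target reduces to showing $\int_M \eta\wedge\delbar\lambda=0$. For this I would apply Stokes a second time to the $(n,n-1)$-form $\eta\wedge\lambda$. The same bidegree accounting shows $d\eta\wedge\lambda=0$ outright (its nonvanishing components would sit in $A^{n+1,n-1}$ and $A^{n+2,n-2}$, both zero), and in $\eta\wedge d\lambda$ the terms involving $\mu\lambda$ and $\del\lambda$ are zero by bidegree range, the $\bar\mu$-piece gives $\eta\wedge\bar\mu\lambda\in A^{n-1,n+1}=0$, and only $\eta\wedge\delbar\lambda$ survives. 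Hence $d(\eta\wedge\lambda)=-\eta\wedge\delbar\lambda$ at top degree, and Stokes gives $\int_M \eta\wedge\delbar\lambda=0$, which chains with the first step to yield the theorem.

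There is no serious obstacle; the only delicate point is the bidegree bookkeeping, which is heavier than in the integrable case because the $\mu$ and $\bar\mu$ components of $d$ must be tracked, but every offending contribution is killed immediately by the vanishing of $A^{p,q}(M)$ outside $0\le p,q\le n$. I also note that the argument uses only $\delbar\eta=0$, not the full harmonicity condition $\del*\eta=0$, so in fact the same identity holds for every $\delbar$-closed $(0,1)$-form.
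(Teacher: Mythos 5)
Your proof is correct and is essentially the paper's argument: the paper likewise applies Stokes' theorem twice (to $\eta\wedge\omega^{n-1}$ and then to $\eta\wedge\lambda$), with the same bidegree bookkeeping showing that only the $(n,n)$-components survive, and concludes via $\delbar\eta=0$. Your closing observation that only $\delbar$-closedness of $\eta$ (and only the $\delbar$-exactness of $\del\omega^{n-1}$, not the Gauduchon condition) is used is accurate and consistent with the paper's proof.
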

\begin{proof}
Suppose that $\omega$ is strongly Gauduchon, namely $\omega$ is Gauduchon and $\del\omega^{n-1}=\delbar\lambda$ for some $\lambda\in A^{n,n-2}(M)$. Then the integral condition (\ref{eq:ci}) is obtained as a consequence of Stokes Theorem. Indeed,
for every  $\eta\in \mathcal{H}^{0,1}_{\delbar}(M)$,
$$
\begin{array}{lll}
\int_M\del\eta\wedge\omega^{n-1}&=&
\int_M d\eta\wedge\omega^{n-1}=
\int_M\eta\wedge d\omega^{n-1}\\[10pt]
&=& \int_M\eta\wedge\del\omega^{n-1}
=\int_M\eta\wedge\delbar\lambda=
\int_M\eta\wedge d\lambda\\[10pt]
&=&\int_Md\eta\wedge\lambda=
\int_M\delbar\eta\wedge\lambda=0
\end{array}
$$
since $\delbar\eta=0$.\\
\end{proof}

If the almost complex structure is integrable then also the converse is true.

\begin{theorem}\label{prop:sg-ci}
Let $(M,J)$ be a compact complex manifold and let $\omega$ be a Gauduchon metric, then $\omega$ is strongly Gauduchon if and only if
$$
\int_M\del\eta\wedge\omega^{n-1}=0,
$$
for every  $\eta\in \mathcal{H}^{0,1}_{\delbar}(M)$.
\end{theorem}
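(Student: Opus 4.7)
The forward direction (strongly Gauduchon implies the integral condition) is already established by Theorem~\ref{thm:sg-implica-ci}, which in fact holds in the broader almost complex setting. So the plan focuses on the converse: assume $J$ is integrable, $\omega$ is Gauduchon, and the integral condition holds, and prove that $\partial\omega^{n-1}$ is $\bar\partial$-exact. Since $\omega$ is Gauduchon, $\partial\bar\partial\omega^{n-1}=0$, and integrability gives $\bar\partial\partial\omega^{n-1}=-\partial\bar\partial\omega^{n-1}=0$. Hence $\alpha := \partial\omega^{n-1}$ is a $\bar\partial$-closed $(n,n-1)$-form and defines a class $[\alpha]\in H^{n,n-1}_{\bar\partial}(M)$. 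By definition $\omega$ is strongly Gauduchon precisely when $[\alpha]=0$, so the problem reduces to showing the integral condition forces the vanishing of this Dolbeault class.

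The next step is to relate the integral condition to the pairing of $[\alpha]$ with classes in $H^{0,1}_{\bar\partial}(M)$ via the identity
$$
\int_M \partial\omega^{n-1}\wedge\eta \;=\; -\int_M \partial\eta\wedge\omega^{n-1},
$$
valid for every $\bar\partial$-closed $(0,1)$-form $\eta$. I would derive this by applying Stokes to $d(\omega^{n-1}\wedge\eta)$: in the integrable case $d=\partial+\bar\partial$, and since $\eta$ has type $(0,1)$ and $\omega^{n-1}$ has type $(n-1,n-1)$, the only components of $d\omega^{n-1}\wedge\eta$ and $\omega^{n-1}\wedge d\eta$ contributing to the top bidegree $(n,n)$ are $\partial\omega^{n-1}\wedge\eta$ and $\omega^{n-1}\wedge\partial\eta$, respectively; the sign then comes from $d(\omega^{n-1}\wedge\eta)=d\omega^{n-1}\wedge\eta+\omega^{n-1}\wedge d\eta$ together with $\int_Md(\omega^{n-1}\wedge\eta)=0$.

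Finally, I would invoke Serre duality on the compact complex manifold $M$: the bilinear pairing
$$
H^{n,n-1}_{\bar\partial}(M)\times H^{0,1}_{\bar\partial}(M)\longrightarrow \C,\qquad ([\alpha],[\eta])\longmapsto \int_M \alpha\wedge\eta,
$$
is non-degenerate, and by Hodge theory every class in $H^{0,1}_{\bar\partial}(M)$ is represented by some $\eta\in\mathcal{H}^{0,1}_{\bar\partial}(M)$. Combining the identity of the previous step with the assumed integral condition, one obtains $\int_M\partial\omega^{n-1}\wedge\eta=0$ for a harmonic representative of every class in $H^{0,1}_{\bar\partial}(M)$. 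Serre duality then forces $[\partial\omega^{n-1}]=0$ in $H^{n,n-1}_{\bar\partial}(M)$, so $\partial\omega^{n-1}$ is $\bar\partial$-exact and $\omega$ is strongly Gauduchon.

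I do not anticipate a serious obstacle: all the analytic input (Hodge decomposition and Serre duality for $H^{n,n-1}_{\bar\partial}\cong (H^{0,1}_{\bar\partial})^*$) is standard once $J$ is integrable, and is precisely the place where the argument for the converse genuinely uses integrability. The only delicate point is the bookkeeping of bidegrees and signs in the Stokes computation, but this is routine. It is also worth noting that without integrability the whole strategy breaks: $\partial\omega^{n-1}$ need not be $\bar\partial$-closed, there is no cohomological target in which to place its class, and $\mathcal{H}^{0,1}_{\bar\partial}(M)$ itself depends on $\omega$, explaining why the converse is restricted to the complex case.
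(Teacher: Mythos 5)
Your proof is correct, but it takes a genuinely different route from the paper's. The paper argues entirely through harmonic theory: it rewrites the integral condition as $\del^*\omega\perp\mathcal{H}^{0,1}_{\delbar}(M)$ (Proposition \ref{prop:orthogonal}), invokes the Hodge decomposition to write $\del^*\omega=\delbar f+\delbar^*\beta$, applies the Hodge star, and uses the Gauduchon condition to show $\del^*(*f)=0$, which leaves $\delbar\omega^{n-1}=\del\bigl((n-1)!\,*\beta\bigr)$ and hence $\del\omega^{n-1}\in\operatorname{Im}\delbar$ by conjugation. You instead observe that Gauduchon plus integrability makes $\del\omega^{n-1}$ a $\delbar$-closed $(n,n-1)$-form, use Stokes to convert the integral condition into the vanishing of the Serre pairing of $[\del\omega^{n-1}]\in H^{n,n-1}_{\delbar}(M)$ against all of $H^{0,1}_{\delbar}(M)$ (every class having a harmonic representative), and conclude $[\del\omega^{n-1}]=0$ by non-degeneracy. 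Your sign bookkeeping in the Stokes identity is right, and the pairing is well defined on classes, so the argument is complete. What your approach buys is a cleaner conceptual picture: strongly Gauduchon is exactly the vanishing of the Dolbeault class of $\del\omega^{n-1}$, and the cohomological reformulation of Theorem \ref{thm:sg-ci-cohomology} becomes immediate. What the paper's approach buys is an explicit $\delbar$-primitive coming from the $\beta$-component of the Hodge decomposition, and a proof that stays within the same $L^2$/harmonic framework used throughout the almost-complex part of the paper; both arguments use integrability and the Gauduchon hypothesis in an essential way, just at different points.
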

\begin{proof}
The first implication follows from Theorem \ref{thm:sg-implica-ci}.
We then need to prove the converse.\\
By Proposition \ref{prop:orthogonal} we have
$$
0=\int_M\del\eta\wedge\omega^{n-1}=(n-1)!\int_M\del\eta\wedge*\omega
=(n-1)!(\del\eta,\omega)_{L^2}=(n-1)!(\eta,\del^*\omega)_{L^2}
$$
for every $\eta\in\mathcal{H}^{0,1}_{\delbar}(M)$.\\
Hence $\del^*\omega\perp\mathcal{H}^{0,1}_{\delbar}(M)$,
therefore, 
$$
\del^*\omega\in\text{Im}\,\delbar\oplus\text{Im}\,\delbar^*=
\delbar A^{0,0}(M)\oplus\delbar^*A^{0,2}(M),
$$
namely, there exists $f\in\mathcal{C}^{\infty}(M)$ and $\beta\in A^{0,2}(M)$ such that
$$
\del^*\omega=\delbar f+\delbar^*\beta.
$$
This is equivalent to 
(using the $\mathbb{C}$-linear Hodge-$*$-operator)
$$
\delbar*\omega=*\delbar f+\del*\beta\qquad\iff\qquad
\delbar\frac{\omega^{n-1}}{(n-1)!}=-\del^*(*f)+\del*\beta.
$$
Since $\omega$ is Gauduchon, that is $\del\delbar\omega^{n-1}=0$,  we get
$$
\del\del^*(*f)=0
$$
hence $\del^*(*f)=0$ and so
$$
\delbar\omega^{n-1}=\del\left( (n-1)!*\beta\right)\in\text{Im}\,\del\,,
$$
or equivalently, since $\omega$ is real, $\del\omega^{n-1}\in\text{Im}\,\delbar$,
that means that $\omega$ is strongly Gauduchon.
\end{proof}

\begin{rem}
Notice that if $J$ is integrable, namely $(M,J)$ is a compact complex manifold then if $\omega$ is a Gauduchon metric on $M$ then, by Stokes' Theorem, the  integral condition (\ref{eq:ci}) does not depend on the representative in the cohomology classes of $H^{0,1}_{\delbar}(M)$, namely
$$
\int_M\del\left[\eta\right]\wedge\omega^{n-1}
$$
is well defined for every  $\left[\eta\right]\in H^{0,1}_{\delbar}(M)$. 
\end{rem}
Hence we can reformulate
\begin{theorem}\label{thm:sg-ci-cohomology}
Let $M$ be a compact complex manifold and let $\omega$ be a Gauduchon metric, then $\omega$ is strongly Gauduchon if and only if
$$
\int_M\del\left[\eta\right]\wedge\omega^{n-1}=0,
$$
for every  $\left[\eta\right]\in H^{0,1}_{\delbar}(M)$.
\end{theorem}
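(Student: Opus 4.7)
The plan is to observe that this theorem is essentially a repackaging of Theorem \ref{prop:sg-ci} in cohomological language, once we verify that, under the Gauduchon hypothesis, the integral $\int_M \partial\eta \wedge \omega^{n-1}$ depends only on the Dolbeault class $[\eta] \in H^{0,1}_{\bar\partial}(M)$ and not on its chosen representative. This is the content of the remark immediately preceding the statement, so the strategy is to prove that remark and then appeal to Theorem \ref{prop:sg-ci}.

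First I would establish the well-definedness. Since $J$ is integrable, $\bar\partial^2 = 0$ and two $\bar\partial$-closed $(0,1)$-representatives of the same class differ by $\bar\partial f$ for some $f \in \mathcal{C}^\infty(M)$. So the task reduces to showing $\int_M \partial\bar\partial f \wedge \omega^{n-1} = 0$ whenever $\omega$ is Gauduchon. The natural approach is to integrate by parts twice: apply Stokes to $d(\bar\partial f \wedge \omega^{n-1})$, using $d\bar\partial f = \partial\bar\partial f$ and the fact that for bidegree reasons $\bar\partial f \wedge \bar\partial\omega^{n-1}$ vanishes, to rewrite $\int_M \partial\bar\partial f \wedge \omega^{n-1} = \int_M \bar\partial f \wedge \partial\omega^{n-1}$. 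Then apply Stokes again to $d(f\, \partial\omega^{n-1})$, noting that $\partial f \wedge \partial \omega^{n-1}$ vanishes for bidegree reasons, to get $\int_M \bar\partial f \wedge \partial\omega^{n-1} = -\int_M f\cdot d\partial\omega^{n-1}$. Finally, $d\partial\omega^{n-1} = -\partial\bar\partial\omega^{n-1} = 0$ by the Gauduchon condition, and we are done.

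Second, I would invoke the Hodge decomposition. Since $J$ is integrable, the usual orthogonal Hodge decomposition holds and there is a canonical isomorphism $\mathcal{H}^{0,1}_{\bar\partial}(M) \cong H^{0,1}_{\bar\partial}(M)$ sending a harmonic form to its class. Via this isomorphism, the condition that the integral in the statement vanishes on every class $[\eta] \in H^{0,1}_{\bar\partial}(M)$ is equivalent, by well-definedness, to its vanishing on every harmonic representative $\eta \in \mathcal{H}^{0,1}_{\bar\partial}(M)$. At this point Theorem \ref{prop:sg-ci} gives the equivalence with $\omega$ being strongly Gauduchon.

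There is no real obstacle: the only nontrivial point is the well-definedness computation above, and that is a short Stokes argument making essential use of the Gauduchon hypothesis $\partial\bar\partial\omega^{n-1}=0$. One only has to be careful that both integration-by-parts steps respect the bidegree so that extraneous terms drop out.
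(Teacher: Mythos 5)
Your proposal is correct and follows essentially the same route as the paper: the paper's own treatment consists of the Remark preceding the statement (asserting, via Stokes' Theorem and the Gauduchon condition, that the integral depends only on the Dolbeault class) followed by an appeal to Theorem \ref{prop:sg-ci}, and your two integration-by-parts computations together with the Hodge isomorphism $\mathcal{H}^{0,1}_{\delbar}(M)\cong H^{0,1}_{\delbar}(M)$ supply exactly the details the paper leaves implicit. No gaps.
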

Recall that a compact complex manifold  is called \emph{sGG} if every Gauduchon metric is also strongly Gauduchon (see \cite{popovici-ugarte}). Then the following holds
\begin{prop}\label{prop:sgg-ci}
Let $M$ be a compact complex manifold of complex dimension $n$  and suppose that for every Hermitian metric $\omega$ on $M$,
$$
\int_M\del\eta\wedge\omega^{n-1}=0,
$$
for every  $\eta\in \mathcal{H}^{0,1}_{\delbar}(M)$, then $M$ is sGG.
\end{prop}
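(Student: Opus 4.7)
The plan is to argue that this proposition is a direct corollary of Theorem \ref{prop:sg-ci}. By definition, $M$ is sGG if every Gauduchon metric on $M$ is strongly Gauduchon, so it suffices to pick an arbitrary Gauduchon metric $\omega$ on $M$ and show that $\omega$ is strongly Gauduchon.

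First I would observe that since the hypothesis assumes the integral condition holds for \emph{every} Hermitian metric on $M$, it holds in particular for the chosen Gauduchon metric $\omega$, that is, $\int_M \del\eta \wedge \omega^{n-1} = 0$ for every $\eta \in \mathcal{H}^{0,1}_{\delbar}(M)$. Since $\omega$ is already Gauduchon and since $J$ is integrable (as $M$ is a complex manifold), Theorem \ref{prop:sg-ci} applies and yields that $\omega$ is strongly Gauduchon. As $\omega$ was an arbitrary Gauduchon metric, every Gauduchon metric on $M$ is strongly Gauduchon, so $M$ is sGG by definition. There is essentially no obstacle: the whole content is packaged in Theorem \ref{prop:sg-ci}, and the proposition simply records that when the integral condition is universal (i.e.\ metric-independent), the manifold automatically falls into the sGG class introduced in \cite{popovici-ugarte}.
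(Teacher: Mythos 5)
Your proposal is correct and follows exactly the paper's argument: specialize the universal hypothesis to an arbitrary Gauduchon metric and invoke Theorem \ref{prop:sg-ci} to conclude it is strongly Gauduchon, hence $M$ is sGG. Nothing is missing.
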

\begin{proof}
Suppose that, $$
\int_M\del\eta\wedge\omega^{n-1}=0,
$$
for every  $\eta\in \mathcal{H}^{0,1}_{\delbar}(M)$ and for every Hermitian metric $\omega$ on $M$. In particular, the integral condition is satisfied by all the Gauduchon metrics and so by  Theorem  \ref{prop:sg-ci}, they are all strongly Gauduchon and by definition this implies that $M$ is sGG.
\end{proof}
Moreover, as a corollary we obtain (cf. \cite[Theorem 1.4]{popovici-ugarte})
\begin{cor}\label{cor:sgg-ci}
Let $M$ be a compact complex manifold with $h^{0,1}_{\delbar}=0$. Then, $M$ admits a strongly Gauduchon metric.
\end{cor}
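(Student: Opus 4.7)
The plan is to chain together the hypothesis $h^{0,1}_{\delbar}=0$, Proposition \ref{prop:sgg-ci}, and Gauduchon's existence theorem. Since $(M,J)$ is a compact complex manifold, the Dolbeault Laplacian has its usual Hodge-theoretic meaning: the harmonic space $\mathcal{H}^{0,1}_{\delbar}(M)$ is isomorphic to the Dolbeault cohomology group $H^{0,1}_{\delbar}(M)$, whose dimension $h^{0,1}_{\delbar}$ is a holomorphic invariant independent of the chosen Hermitian metric. Thus the assumption $h^{0,1}_{\delbar}=0$ is equivalent to $\mathcal{H}^{0,1}_{\delbar}(M)=\{0\}$ \emph{for every} Hermitian metric $\omega$ on $M$.

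With this observation, the integral condition
$$
\int_M \del\eta \wedge \omega^{n-1} = 0, \qquad \forall\, \eta \in \mathcal{H}^{0,1}_{\delbar}(M),
$$
is vacuously satisfied by every Hermitian metric $\omega$ on $M$. Proposition \ref{prop:sgg-ci} then applies and yields that $M$ is sGG, i.e.\ every Gauduchon metric on $M$ is strongly Gauduchon.

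It remains to exhibit a Gauduchon metric. For $n>1$, the theorem of Gauduchon \cite{gauduchon} ensures that every conformal class of Hermitian metrics on the compact manifold $M$ contains a Gauduchon representative; by the previous step this representative is automatically strongly Gauduchon, proving the claim. The case $n=1$ is trivial, since then $\omega^{n-1}$ is a function and $\del\omega^{n-1}=0$ automatically, so every Hermitian metric is strongly Gauduchon.

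I do not expect a real obstacle here: the argument is a direct chaining of previously established results. The only subtle point worth highlighting is the metric-independence of $\mathcal{H}^{0,1}_{\delbar}(M)$ in the integrable setting, which is precisely what allows the cohomological hypothesis $h^{0,1}_{\delbar}=0$ to be propagated uniformly across all Hermitian metrics and make the integral condition in Proposition \ref{prop:sgg-ci} vacuous.
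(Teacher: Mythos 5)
Your proposal is correct and follows essentially the same route as the paper: the hypothesis $h^{0,1}_{\delbar}=0$ makes the integral condition vacuous for every Hermitian metric, and combining this with Gauduchon's existence theorem and Theorem \ref{prop:sg-ci} (directly or via Proposition \ref{prop:sgg-ci}) yields a strongly Gauduchon metric. Your explicit remark that metric-independence of $\mathcal{H}^{0,1}_{\delbar}(M)$ in the integrable case is what propagates the hypothesis across all metrics, and your treatment of the trivial case $n=1$, are welcome refinements that the paper leaves implicit.
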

\begin{proof}
Suppose that $h^{0,1}_{\delbar}=0$ hence, the integral condition is trivially satisfied by every Hermitian metric. In particular, Gauduchon metrics, that always exist, satisfy such condition. This implies that every Gauduchon metric is strongly Gauduchon.
\end{proof}

Notice that if $M$ is a compact complex surface with $h^{0,1}_{\delbar}=0$, then $M$ admits a strongly Gauduchon metric and so by \cite{popovici-surface} $M$ admits a K\"ahler metric. This is a different way to prove the well known result that compact complex surfaces with $h^{0,1}_{\delbar}=0$ are K\"ahler (cf. e.g., \cite[Lemma 2.6, Theorem 2.7]{barth-hulek-peters-van de ven}).\\

We will now provide some examples of Hermitian metrics satisfying the integral condition on compact manifolds.
An important source of examples of (almost) complex manifolds admitting special Hermitian metrics is given by solvmanifolds, that are compact quotients of simply connected, connected solvable Lie groups by a lattice.\\
For this reason we prove the following

\begin{prop}\label{prop:left-inv-gauduchon}
Let $M$ be the compact quotient of a connected, simply connected Lie group by a lattice. Suppose that $M$ is endowed with an invariant almost complex structure $J$. Then, any invariant Hermitian metric $\omega$ is Gauduchon. 
\end{prop}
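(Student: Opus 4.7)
The plan is to combine three observations: (i) invariance of $\omega$ propagates to $\partial\bar\partial\omega^{n-1}$; (ii) bidegree constraints on a $2n$-dimensional almost complex manifold collapse $d(\bar\partial\omega^{n-1})$ to its $(n,n)$-component $\partial\bar\partial\omega^{n-1}$; (iii) Stokes combined with unimodularity of $G$ forces any invariant, $d$-exact top form on $M$ to vanish.

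First I would observe that since $J$ is left-invariant, the bigrading $A^{\bullet}(M)=\bigoplus_{p,q} A^{p,q}(M)$ is preserved by left translations, and hence each of the four components $\mu,\partial,\bar\partial,\bar\mu$ of $d$ maps invariant forms to invariant forms. Wedge products of invariant forms are invariant, so $\omega^{n-1}$ is invariant, and consequently $\partial\bar\partial\omega^{n-1}$ is an invariant $(n,n)$-form on $M$.

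Next I would reduce $d(\bar\partial\omega^{n-1})$ to $\partial\bar\partial\omega^{n-1}$ purely by bidegree. The form $\bar\partial\omega^{n-1}$ has bidegree $(n-1,n)$, so applying $\mu$, $\bar\partial$, $\bar\mu$ would produce forms of bidegree $(n+1,n-1)$, $(n-1,n+1)$, $(n-2,n+2)$ respectively, each of which lies outside the admissible range $0\le p,q\le n$ on a $2n$-dimensional almost complex manifold and thus vanishes identically. Hence $d(\bar\partial\omega^{n-1})=\partial\bar\partial\omega^{n-1}$.

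To finish, I would invoke the classical fact that any connected Lie group admitting a lattice is unimodular, so $G$ carries a bi-invariant volume form which descends to an invariant volume form $dV$ on $M=G/\Gamma$; since the space of invariant $2n$-forms is one-dimensional, we may write $\partial\bar\partial\omega^{n-1}=c\,dV$ for a constant $c$. Stokes' theorem on the closed manifold $M$ applied to $\bar\partial\omega^{n-1}$ yields
$$
c\cdot\mathrm{vol}(M)=\int_M\partial\bar\partial\omega^{n-1}=\int_M d(\bar\partial\omega^{n-1})=0,
$$
so $c=0$ and $\partial\bar\partial\omega^{n-1}=0$, meaning $\omega$ is Gauduchon. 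The only non-computational input is unimodularity of $G$; everything else is bookkeeping on bidegrees, so I do not anticipate any serious obstacle.
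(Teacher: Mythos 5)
Your proof is correct, but it takes a different route from the paper's. The paper works with the Lee form $\theta$ (defined by $d\omega^{n-1}=\theta\wedge\omega^{n-1}$) and the characterization that $\omega$ is Gauduchon iff $d^*\theta=0$: since $\omega$ and $J$ are invariant, $d^*\theta$ is an invariant function, hence constant, and $\int_M d^*\theta\,\omega^n/n!=(\theta,d(1))_{L^2}=0$ forces that constant to vanish. You instead apply the same philosophy (``invariant $+$ zero integral $\Rightarrow$ zero'') one degree up, directly to the top form $\del\delbar\omega^{n-1}$: invariance makes it a constant multiple of the invariant volume form, the bidegree bookkeeping shows it equals $d(\delbar\omega^{n-1})$, and Stokes kills the constant. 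Your version is arguably more self-contained, since it bypasses the Lee-form characterization of the Gauduchon condition (which itself needs a small justification in the non-integrable setting), and the bidegree collapse of $d$ on an $(n-1,n)$-form is exactly right. One minor remark: the appeal to unimodularity is superfluous --- any left-invariant top form on $G$ is in particular invariant under left translations by $\Gamma$ and so descends to a nowhere-vanishing volume form on $\Gamma\backslash G$, which is all you use; that Lie groups admitting lattices are unimodular is true but not needed here.
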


\begin{proof}
Let $\omega$ be an invariant Hermitian metric on $M$ and let
 $\theta$ be the associated Lee form. Then, since $\omega$ and $J$ are invariant, $d^*\theta$ is an invariant  function on $M$ and so it is constant.
However, if we compute
$$
\int_M d^*\theta\frac{\omega^n}{n!}=\left(d^*\theta,1\right)_{L^2}=\left(\theta,d\,(1)\right)_{L^2}=0
$$
and so $d^*\theta=0$, namely $\omega$ is a Gauduchon metric.\\
\end{proof}

As a consequence of Proposition \ref{prop:left-inv-gauduchon} and Theorem \ref{prop:sg-ci}, in order to construct new examples of Hermitian metrics satisfying (\ref{eq:ci}) that are not strongly Gauduchon on solvmanifolds we will need to consider non invariant Hermitian metrics.

\begin{ex}\label{ex:iwasawa}
Let $\mathbb{I}$ be the Iwasawa manifold defined as the  compact quotient $\mathbb{I}:=\Gamma\backslash\mathbb{H}_3$,  where
$$
\mathbb{H}_3:=\left\lbrace
\left[\begin{matrix}
1 & z_1 &   z_3 \\
0 & 1   & z_2\\
0 & 0 & 1\\
\end{matrix}\right]
\mid z_1,z_2,z_3\in\mathbb{C}
\right\rbrace
$$
and
$$
\Gamma:=\left\lbrace
\left[\begin{matrix}
1 & \gamma_1 &   \gamma_3 \\
0 & 1   & \gamma_2\\
0 & 0 & 1\\
\end{matrix}\right]
\mid \gamma_1,\gamma_2,\gamma_3\in\mathbb{Z}[\,i\,]
\right\rbrace\,.
$$
Set
$$
\varphi^1:=dz_1,\qquad
\varphi^2:=dz_2,\qquad
\varphi^3:=dz_3-z_1dz_2
$$
as global co-frame of $(1,0)$-forms and let $V_1,V_2,V_3$ the dual frame of vector fields. We get
$$
V_1=\frac{\partial}{\partial z_1}\,,\quad 
V_2=\frac{\partial}{\partial z_2}+z_1\frac{\partial}{\partial z_3}\,,\quad 
V_3=\frac{\partial}{\partial z_3}\,.
$$\\
Then the complex structure equations become
$$
d\varphi^1=d\varphi^2=0, \qquad
d\varphi^3=-\varphi^1\wedge\varphi^2\,.
$$
We will set as usual $\varphi^{i\bar j}=\varphi^i\wedge\varphi^{\bar j}$ and so on.
Let $\sigma:\mathbb{I}\to\mathbb{R}$ be a smooth function such that
$$
V_3\bar V_3(\sigma)\neq 0,
$$
that is $\sigma$ can be identified to a smooth function $\sigma:\C^3\to \mathbb{R}$ satisfying the following conditions:
$$
\sigma(z_1+\gamma_1,z_2+\gamma_2,z_3+\gamma_1z_2+\gamma_3)=\sigma(z_1,z_2,z_3),\quad \forall (\gamma_1,\gamma_2,\gamma_3)\in\Gamma,
$$
$$
\frac{\partial^2 \sigma}{\partial z_3\partial \overline{z_3}}\neq 0
$$
 and consider the following Hermitian metric
$$
\omega_\sigma:=\frac{i}{2}\left(e^{-2\sigma}\varphi^{1\bar 1}+e^{-2\sigma}\varphi^{2\bar 2}+e^{2\sigma}\varphi^{3\bar 3}\right).
$$
Notice that $\omega_\sigma$ is not Gauduchon, indeed
$$
\begin{array}{lll}
\del\delbar\omega_\sigma^2&=&
-\frac{1}{2}\del\delbar\left(e^{-4\sigma}\varphi^{1\bar 12\bar2}+
\varphi^{1\bar 13\bar3}+\varphi^{2\bar 23\bar3}\right)=
2e^{-4\sigma}\del\delbar\sigma\wedge\varphi^{1\bar 12\bar 2}\\ [10pt]
&=& 2e^{-4\sigma}V_3\bar V_3(\sigma)\varphi^{1\bar 12\bar 2 3\bar3}\neq 0.
\end{array}
$$
Hence in particular $\omega_\sigma$ is not strongly Gauduchon.\\
Now observe that with respect to $\omega_\sigma$,
$$
\mathcal{H}^{0,1}_{\delbar}=\left\langle \bar\varphi^1,\bar\varphi^2\right\rangle.
$$
Indeed, denoting with $*_\sigma$ the $\mathbb{C}$-linear Hodge-star-operator with respect to $\omega_\sigma$, we get
$$
*_\sigma\bar\varphi^{1}=\varphi^{23\bar1\bar2\bar3},\quad
*_\sigma\bar\varphi^{2}=-\varphi^{13\bar1\bar2\bar3}
$$
and so
$$
\del*_\sigma\bar\varphi^{1}=0,\qquad
\del*_\sigma\bar\varphi^{2}=0.
$$
Since, $\del\bar\varphi^1=0$ and $\del\bar\varphi^2=0$, we have
that
$$
\int_{\mathbb{I}}\del\eta\wedge\omega^{2}=0,
$$
for every $\eta\in\mathcal{H}^{0,1}_{\delbar}$, namely $\omega_\sigma$ satisfies the integral condition  (\ref{eq:ci}).
\end{ex}

The previous example shows that the integral condition on compact complex manifolds cannot be characterized by the strongly Gauduchon one so this leads to the following open question.
\begin{question**}
Is there a geometric condition on a Hermitian metric  that the integral condition characterizes?
\end{question**}

\section{The integral condition on almost complex surfaces}\label{sect:ci-surf}

We recall that on compact complex surfaces if there exists a strongly Gauduchon metric then there exists a K\"ahler metric by \cite{popovici-surface}. We wonder whether a similar result holds for the integral condition (\ref{eq:ci}), i.e. if   a compact complex surface $(M, J)$   endowed with a Hermitian metric satisfying the integral condition  admits  also a K\"ahler metric  (see Question \ref{conj:surface}). 
If we do not assume that $J$ is integrable, the answer to Question \ref{conj:surface} is negative. 

Namely, we will provide an explicit Hermitian metric on a compact almost complex surface without any complex structure and any compatible  almost K\"ahler metric. To this purpose, let $G=\hbox{\rm Nil}^4$ be the simply-connected and connected  $3$-step nilpotent Lie group whose Lie algebra $\mathfrak{g}$ has a basis $\{e_1,\ldots,e_4\}$ satisfying
$$
[e_1,e_2]=e_3,\qquad [e_1,e_3]=e_4\,.
$$
Accordingly, denoting by $\{e^1,\ldots,e^4\}$ the dual basis of $\{e_1,\ldots,e_4\}$, the following structure equations hold
$$
de^1=0,\quad de^2=0,\quad de^3=-e^{12},\quad de^4=-e^{13},
$$  
where $e^{ij}=e^i\wedge e^j$ and so on. Then $G$ admits lattices $\Gamma$ so that $M=\Gamma\backslash G$ is a compact $4$-dimensional nilmanifold. Since $b^1(M)=2$, $M$ has no complex structures.
Let us define the following almost complex structure $J$ given by
$$
Je_1=e_2,\qquad Je_2=-e_2,\qquad Je_3=e_4,\qquad Je_4=-e_3\,.
$$
Setting $\varphi^1$ and $\varphi^2$ for the associated global co-frame of $(1,0)$-forms, we obtain the following structure equations (see for instance \cite{cirici-wilson-2, tardini-tomassini})
$$
\left\lbrace
\begin{array}{lcl}
d\varphi^1 & =& 0\\
d\varphi^2 &=& \frac{1}{2i}\varphi^{12}+\frac{1}{2i}\left(\varphi^{1\bar 2}-\varphi^{2\bar 1}\right)-i\varphi^{1\bar 1}+\frac{1}{2i}\varphi^{\bar 1\bar 2}\,.
\end{array}
\right.
$$
In particular, notice that $J$ is non-integrable.
We fix the diagonal metric 
$$
\omega:=\frac{1}{2i}\left(\varphi^{1\bar 1}+\varphi^{2\bar 2}\right).
$$ 
Notice that $\omega$ is  invariant hence it is Gauduchon, i.e., $\del\delbar\omega=0$. Now, by a direct computation
$$
\del\omega=-\frac{1}{2}\varphi^{12\bar1}
$$
and $$\delbar\varphi^{12}=\frac{1}{2i}\varphi^{12\bar 1}.$$ Hence
$$
\del\omega=\delbar\left(-i\varphi^{12}\right).
$$
Therefore, by definition $\omega$ is strongly Gauduchon and so it satisfies the integral condition in view of Theorem  \ref{prop:sg-ci}.\\
One can prove directly that $J$ does not admit any left-invariant  compatible symplectic structure or we can refer to \cite{cirici-wilson-2} where a different argument is used.
Hence, by an average procedure, since $J$ is a left-invariant almost complex structure  on $G$ that does not admit any left-invariant compatible symplectic structure, then the induced invariant almost complex structure  $J$ on $M$  does not admit any compatible symplectic structure (cf. \cite[Proposition 3.9]{piovani}). So, $(M,J)$ is a $4$-dimensional almost complex manifold that is not almost K\"ahler but it admits a strongly Gauduchon metric that, in particular, satisfies the integral condition.\\

Summing up, we proved the following
\begin{theorem}
There exists a compact almost complex $4$-dimensional manifold $(M,J)$ that admits a strongly Gauduchon metric but it does not have any  compatible almost-K\"ahler metric. In particular, such a metric satisfies the integral condition (\ref{eq:ci}).
\end{theorem}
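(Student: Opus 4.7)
The plan is to construct the example precisely as outlined in the paragraphs preceding the statement. First I would take the $3$-step nilpotent Lie group $G=\mathrm{Nil}^4$ with Lie algebra structure $[e_1,e_2]=e_3,\ [e_1,e_3]=e_4$, fix a lattice $\Gamma\subset G$, and form the compact quotient $M=\Gamma\backslash G$. Since $b_1(M)=2$, $M$ admits no integrable complex structure: a compact complex surface with even first Betti number must be K\"ahler (Buchdahl--Lamari), but Hasegawa has shown that a non-toral compact nilmanifold admits no K\"ahler metric. Then I would equip $M$ with the left-invariant almost complex structure $J$ defined by $Je_1=e_2$ and $Je_3=e_4$, pass to the unitary coframe $\{\varphi^1,\varphi^2\}$ displayed in the text, and note from the structure equation for $d\varphi^2$ that $J$ is non-integrable.

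Second, I would verify that the invariant diagonal fundamental form $\omega=\tfrac{1}{2i}(\varphi^{1\bar 1}+\varphi^{2\bar 2})$ is strongly Gauduchon. By Proposition \ref{prop:left-inv-gauduchon} every invariant Hermitian metric is Gauduchon, so $\del\delbar\omega=0$ automatically. Since $n=2$, $\omega^{n-1}=\omega$, and the strongly Gauduchon property reduces to finding $\lambda$ with $\del\omega=\delbar\lambda$. A direct computation from the structure equations yields $\del\omega=-\tfrac12\varphi^{12\bar 1}$ and $\delbar\varphi^{12}=\tfrac{1}{2i}\varphi^{12\bar 1}$, so $\lambda=-i\varphi^{12}$ works. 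The integral condition for $\omega$ is then automatic from Theorem \ref{thm:sg-implica-ci}.

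Third, the main obstacle is to rule out any compatible almost-K\"ahler metric. An averaging argument (as in \cite[Proposition 3.9]{piovani}) reduces this to a purely Lie-algebraic question: if $(M,J)$ carried a compatible symplectic form, then because $J$ is itself left-invariant, averaging against the Haar measure would produce a left-invariant, closed, $J$-compatible $2$-form on $\mathfrak{g}$, with closedness, $J$-invariance, and positivity all preserved under averaging. So it suffices to rule out a left-invariant $J$-compatible symplectic form $\Omega=\sum_{i<j}a_{ij}e^{ij}$ on $\mathfrak{g}$.

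Finally, I would carry out the algebraic obstruction. Using $de^3=-e^{12}$ and $de^4=-e^{13}$, closedness of $\Omega$ forces $a_{24}=a_{34}=0$, and imposing $J$-invariance on the pairs $(e_1,e_3)$ and $(e_1,e_4)$ further yields $a_{13}=0$ and $a_{14}+a_{23}=0$. But then the positivity condition on $g(X,Y):=\Omega(X,JY)$ fails at $e_3$, since
\[
g(e_3,e_3)=\Omega(e_3,Je_3)=\Omega(e_3,e_4)=a_{34}=0,
\]
a contradiction. Hence $J$ admits no compatible almost-K\"ahler metric, and the strongly Gauduchon $\omega$ above is the desired example.
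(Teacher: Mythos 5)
Your proposal is correct and follows the paper's construction essentially verbatim: the same nilmanifold $\Gamma\backslash\mathrm{Nil}^4$, the same almost complex structure and diagonal metric, the same computation $\del\omega=\delbar(-i\varphi^{12})$, and the same averaging reduction via \cite[Proposition 3.9]{piovani}. The only difference is that you explicitly carry out the Lie-algebraic obstruction (where in fact $d\Omega=0$ alone already gives $a_{34}=0$ and hence $\Omega(e_3,Je_3)=0$, so the $J$-invariance relations are not even needed), a step the paper leaves to the reader or to the citation \cite{cirici-wilson-2}; this is a worthwhile completion and your computation checks out.
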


Notice that, in particular, the previous example shows that the result in \cite{popovici-surface} saying that strongly Gauduchon compact complex surfaces are K\"ahler does not hold as soon as we drop the integrability assumption on the complex structure.\\

As mentioned, it is still an open problem to understand the existence of Hermitian metrics satisfying the integral condition on non-K\"ahler surfaces, leading to the following
 \begin{question*}
Let $(M,J)$ be a compact complex surface. If there exists a Hermitian metric satisfying the integral condition then does $(M, J)$  admit a K\"ahler metric?
\end{question*}

\end{document}